\newtheorem{theorem}{Theorem}[section]
\newtheorem{proposition}[theorem]{Proposition}
\newtheorem{es}[theorem]{Example}
\newtheorem{remark}[theorem]{Remark}
\def\erre{{\rm I\!R}}
\def\RR{{\rm I\!R}}
\def\essinf{\mathop {\rm ess\,inf}}
\def\phi{\varphi}
\def\esssup{\mathop{\rm esssup}}
\title[Existence and localization of
solutions...]{Existence and localization of
solutions for nonlocal fractional equations}
\author{Giovanni Molica Bisci}
\address[G. Molica Bisci]{Dipartimento P.A.U., Universit\`a  degli
Studi Mediterranea di Reggio Calabria, Salita Melissari - Feo di
Vito, 89124 Reggio Calabria, Italy} \email{gmolica@unirc.it}
\author{Du\v{s}an Repov\v{s}}
\address[D. Repov\v{s}]{Faculty of Education, and Faculty of Mathematics and Physics\\ University of Ljubljana, POB 2964, Ljubljana, Slovenia 1001}
\email{dusan.repovs@guest.arnes.si}
\thanks{{\it 2010 Mathematics Subject Classification.} Primary:
35J62, 35J92, 35J20;
Secondary: 35J15, 47J30.}
\keywords{Fractional equations; Multiple solutions; Critical points results.}
\thanks{Typeset by \LaTeX}
\begin{document}
\begin{abstract}
 This work is devoted to the study of the existence of at least one weak solution to nonlocal equations involving a general
integro-differential operator of fractional type. As a special case, we derive an
existence theorem for the fractional Laplacian, finding a nontrivial weak
solution of the equation
%\begin{equation*}
%\left\{
\begin{eqnarray*}
\begin{cases}
(-\Delta)^s u=h(x)f(u) & {\mbox{ in }} \Omega\\
u=0 & {\mbox{ in }} \erre^n\setminus \Omega,
\end{cases}
\end{eqnarray*}
%\right.
%\end{equation*} 
where $h\in L^{\infty}_+(\Omega)\setminus\{0\}$ and $f:\erre\rightarrow\erre$ is a suitable continuous function.
These problems have a
variational structure and we find a nontrivial weak solution for them
by exploiting a recent local minimum result for smooth functionals defined on a reflexive Banach space.  To make the nonlinear methods work, some
careful analysis of the fractional spaces involved is necessary.
\end{abstract}
\maketitle
%%%%%%%%%%%%%%%%%%%%%%%%%%%%%%%%%%%%%%%%%%%%%%%%%%%%%%%%%%%%%%%%%%%%%%%%%%%%%%%%%%%%%%%%%%%%%%%%%%%%%%%%

\section{Introduction}
The aim of this paper is to prove some existence results for fractional Laplacian problems whose prototype is
\begin{equation} \tag{$D_{h}^{f}$}\label{prototipo}
\left\{
\begin{array}{ll}
(-\Delta)^s u=h(x)f(u) & {\mbox{ in }} \Omega\\
u=0 & {\mbox{ in }} \erre^n\setminus \Omega,
\end{array} \right.
\end{equation}
where $(-\Delta)^s$ denotes the fractional Laplacian operator with $s\in (0,1)$, and $\Omega$ is an open bounded set with Lipschitz boundary of
$\RR^n$, requiring that $n>2s$. Moreover, $h\in L^{\infty}(\Omega)\setminus\{0\}$ is a nonnegative map and $f:\erre\rightarrow \erre$ represents a subcritical continuous function.\par

The existence of weak solutions for such type of problems has been intensively studied under different assumptions on the nonlinearities (see, for instance, \cite{valpal, Molica, svmountain, svlinking, servadeivaldinociBN} and references therein).\par

Moreover, the existence and multiplicity of solutions for elliptic equations in $\erre^n$, driven by a nonlocal integro-differential operator, whose standard prototype is the fractional Laplacian, have been studied, very recently, by Autuori and Pucci in \cite{AP} (this work is related to the results on general quasilinear elliptic problems given in \cite{AP0}).\par

Motivated by this wide interest, we prove in the present note some existence results (see Theorem~\ref{Esistenza} and its consequences) for fractional equations assuming that $f$ has a suitable behaviour at zero together with some global properties formulated by the means of an auxiliary function $\psi$.\par
 The strategy for proving Theorem~\ref{Esistenza} is based on the fact that our problem can be seen as the Euler--Lagrange equation of a suitable functional defined in a Sobolev space $X_0$.\par
 Hence, the solutions of \eqref{prototipo} or more generally of problem \eqref{Alf} defined in the sequel, can be found as critical points of this functional: for this purpose, along the paper, we will exploit a critical point result due to Ricceri (see Theorem~\ref{CV} and Proposition~\ref{Lemma} below).\par
 Exploiting this result, a key point is to prove the existence of a suitable $\sigma>0$ such that
 $$
\frac{\displaystyle\sup_{\|u\|_{X_0}\leq \sigma}\int_\Omega h(x)\left(\int_0^{u(x)}f(t)dt\right)\,dx}{\sigma}<\frac{1}{2}.
$$

\indent One of the main novelties here is that, in contrast with several known results (see references contained in \cite{R4}), we obtain the above inequality without continuous embedding of the ambient space in $C^0(\bar \Omega)$.\par

\indent In the nonlocal framework, denoting by $\lambda_{1,s}$ the first eigenvalue of the problem
$$ \left\{
\begin{array}{ll}
(-\Delta)^s u=\lambda u & {\mbox{ in }} \Omega\\
u=0 & {\mbox{ in }} \erre^n\setminus \Omega,
\end{array} \right.
$$

\noindent the simplest example we can deal with is
given by the fractional Laplacian, according to the following
result.\par

\begin{theorem}\label{Esistenza3}
Let $s\in (0,1)$, $n>2s$ and let $\Omega$ be an open bounded set of
$\RR^n$ with Lipschitz boundary.
Moreover, let $f:[0,+\infty)\rightarrow [0,+\infty)$ be a continuous function satisfying the following hypotheses$:$

\begin{itemize}
\item[$({\rm h}_1)$]for some $q\in\left[1,\frac{2n}{n-2s}\right)$ the function
$$
t\mapsto \frac{f(t)}{t^{q-1}}
$$
is strictly decreasing in $(0,+\infty)$ and $\displaystyle\lim_{t\rightarrow +\infty}\frac{f(t)}{t^{q-1}}=0$.
\end{itemize}
Further, suppose that
$$
\lim_{\xi\rightarrow +\infty}\frac{F(\xi)}{\xi^2}=0\,\,\,\,{and}\,\,\,\, \liminf_{\xi\rightarrow 0^+}\frac{F(\xi)}{\xi^2}>0.
$$
Then, for every
$$
\alpha>\frac{\lambda_{1,s}}{2\displaystyle\liminf_{\xi\rightarrow 0^+}\frac{F(\xi)}{\xi^2}},
$$
the parametric nonlocal problem
$$ \left\{
\begin{array}{ll}
(-\Delta)^s u=\alpha f(u) & {\mbox{ \rm in }} \Omega\\
u=0 & {\mbox{ \rm in }} \erre^n\setminus \Omega,
\end{array} \right.
$$
admits at least one non-negative and non-zero weak solution
$u_\alpha\in  H^s(\erre^n),$
such that $u_{\alpha}=0$ a.e. in $\erre^n\setminus\Omega$.
\end{theorem}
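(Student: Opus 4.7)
The plan is to cast weak solutions as non-zero critical points of the $C^1$ functional
$$J_\alpha(u) = \frac{1}{2}\|u\|_{X_0}^2 - \alpha \int_\Omega F(u)\,dx, \qquad F(\xi):=\int_0^\xi f(t)\,dt,$$
on the fractional Sobolev space $X_0$, and to apply the abstract result Theorem~\ref{Esistenza} with $h \equiv 1$. To avoid a separate non-negativity argument I first extend $f$ to $\erre$ by setting $f(t) := 0$ for $t \leq 0$; testing the Euler--Lagrange equation against $u_\alpha^-$ then forces the negative part to vanish, by a standard argument adapted to the nonlocal setting. The subcritical growth implicit in $({\rm h}_1)$ together with the compact embeddings $X_0 \hookrightarrow L^r(\Omega)$ for $r \in [1, 2n/(n-2s))$ makes $J_\alpha$ well-defined and of class $C^1$.

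Verifying the hypotheses of Theorem~\ref{Esistenza} amounts to producing $\sigma > 0$ and a test function $\bar u \in X_0$ with $\|\bar u\|_{X_0} < \sigma$ and
$$\frac{\|\bar u\|_{X_0}^2/2}{\int_\Omega F(\bar u)\,dx} \;<\; \frac{1}{\alpha} \;<\; \frac{\sigma^2/2}{\sup_{\|u\|_{X_0}\leq\sigma}\int_\Omega F(u)\,dx}.$$
For the left inequality I would take $\bar u_\varepsilon = \varepsilon e_1$, where $e_1 > 0$ is a first eigenfunction of $(-\Delta)^s$ normalized by $\|e_1\|_{L^2(\Omega)} = 1$ (so that $\|e_1\|_{X_0}^2 = \lambda_{1,s}$). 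Since $F \geq 0$, Fatou's lemma applied to $e_1(x)^2 \cdot F(\varepsilon e_1(x))/(\varepsilon e_1(x))^2$ on $\{e_1 > 0\}$ yields $\liminf_{\varepsilon\to 0^+}\varepsilon^{-2}\int_\Omega F(\varepsilon e_1)\,dx \geq L := \liminf_{\xi\to 0^+}F(\xi)/\xi^2 > 0$, so the limsup of the left-hand quotient is at most $\lambda_{1,s}/(2L)$, strictly below the prescribed $\alpha$.

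The main technical obstacle is the right-hand inequality. Here I would exploit the asymptotic $\lim_{\xi\to+\infty} F(\xi)/\xi^2 = 0$: for every $\eta > 0$ there is $M_\eta > 0$ with $F(\xi) \leq \eta\xi^2$ for $\xi \geq M_\eta$, and since $F$ is continuous and $F(\xi) = 0$ for $\xi \leq 0$,
$$F(\xi) \leq \eta\,\xi^2 + K_\eta \quad \text{for all } \xi\in\erre, \qquad K_\eta := \sup_{[0,M_\eta]}F.$$
Coupled with the Poincaré-type inequality $\|u\|_{L^2(\Omega)}^2 \leq \lambda_{1,s}^{-1}\|u\|_{X_0}^2$, this produces
$$\sup_{\|u\|_{X_0}\leq\sigma}\int_\Omega F(u)\,dx \;\leq\; K_\eta|\Omega| + \frac{\eta}{\lambda_{1,s}}\,\sigma^2.$$
Given $\alpha > \lambda_{1,s}/(2L)$, I fix $\eta \in \bigl(0, \lambda_{1,s}/(2\alpha)\bigr)$ and then choose $\sigma$ large enough that $K_\eta|\Omega| < \sigma^2\bigl(\tfrac{1}{2\alpha} - \tfrac{\eta}{\lambda_{1,s}}\bigr)$; enlarging $\sigma$ further if necessary also accommodates $\|\bar u_\varepsilon\|_{X_0} < \sigma$ without disturbing the left inequality. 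Theorem~\ref{Esistenza} then delivers the required non-zero critical point $u_\alpha$ of $J_\alpha$, which by the preliminary extension of $f$ is automatically non-negative. This last estimate, obtained without any $C^0(\bar\Omega)$ embedding of $X_0$, is precisely the step emphasized in the introduction and is where I expect essentially all of the work to lie.
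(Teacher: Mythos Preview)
Your overall strategy is sound and, once a slip is fixed, yields a valid proof---but it is not an application of Theorem~\ref{Esistenza} as stated. What you actually carry out is a direct application of the underlying Ricceri local-minimum principle (Theorem~\ref{CV}): the right-hand inequality is precisely the condition $\mu=\tfrac12 > -\sigma^{-2}\inf_{\Psi^{-1}((-\infty,\sigma^2])}\Phi$, which produces a global minimum of $J_\alpha$ on the open ball $\{\|u\|_{X_0}<\sigma\}$, and the left-hand inequality forces this minimum to lie below $J_\alpha(0)=0$. Note that your displayed chain should read $\cdots<\alpha<\cdots$ rather than $\cdots<1/\alpha<\cdots$; your own subsequent computations (comparing the left quotient with $\lambda_{1,s}/(2L)<\alpha$, and fixing $\eta<\lambda_{1,s}/(2\alpha)$ on the right) are already written for the correct version. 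Theorem~\ref{Esistenza} itself requires a function $\psi\in\mathfrak{F}_q$, an interval $(a,b)$, uniqueness of the minimizer of $g_\lambda^{F,\psi}$, and a level $r$---none of which you supply---so you should cite Theorem~\ref{CV} instead.

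The paper's route is genuinely different. It deduces Theorem~\ref{Esistenza3} from Theorem~\ref{Esistenza2}, which in turn invokes Theorem~\ref{Esistenza} with $\psi(t)=|t|^q$. There the strict monotonicity of $t\mapsto f(t)/t^{q-1}$ in $({\rm h}_1)$ is essential: it guarantees that each $g_\lambda^{F,\psi}(t)=\lambda|t|^q-F(t)$ has a \emph{unique} global minimizer, the hypothesis needed for Proposition~\ref{Lemma}. Your argument uses $({\rm h}_1)$ only for subcritical growth and bypasses Proposition~\ref{Lemma} entirely, so for this particular statement it is strictly more elementary; the $\psi$-formalism buys generality (Theorem~\ref{Esistenza}) rather than simplicity. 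For the non-triviality step the paper invokes the H\"older regularity $e_1\in C^{0,\alpha}(\bar\Omega)$ (Proposition~\ref{propossizioneRaff}) to make $\varepsilon e_1<\delta$ pointwise, whereas your Fatou argument on $\{e_1>0\}$ avoids that regularity input and works with $e_1\in L^2$ only.
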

The plan of the paper is as follows; Section 2 is devoted to our abstract framework and preliminaries. Successively, in Section 3 we give the main result (see Theorem \ref{Esistenza}). Finally, the fractional Laplacian case is studied in the last section. A concrete example of an application is presented in Example \ref{esempio0}.

\section{Preliminaries}\label{section2}

In this subsection we briefly recall the definition of the functional space~$X_0$, first introduced in \cite{sv, svmountain}.

Let $K:\erre^n\setminus\{0\}\rightarrow(0,+\infty)$ be a function with the properties that:
\begin{itemize}
\item[$(\rm k_1)$] $\gamma K\in L^1(\erre^n)$, \textit{where} $\gamma(x):=\min \{|x|^2, 1\}$;
\item[$(\rm k_2)$] \textit{there exists} $\beta>0$
\textit{such that} $$K(x)\geq \beta |x|^{-(n+2s)},$$
{\textit{for any}} $x\in \erre^n \setminus\{0\}$;
\item[$(\rm k_3)$] $K(x)=K(-x)$, \textit{for any} $x\in \erre^n \setminus\{0\}$.
\end{itemize}

 The functional space $X$ denotes the linear space of Lebesgue
measurable functions from $\RR^n$ to $\RR$ such that the restriction
to $\Omega$ of any function $g$ in $X$ belongs to $L^2(\Omega)$ and
$$
((x,y)\mapsto (g(x)-g(y))\sqrt{K(x-y)})\in
L^2\big((\RR^n\times\RR^n) \setminus ({\mathcal C}\Omega\times
{\mathcal C}\Omega), dxdy\big),$$ where ${\mathcal C}\Omega:=\RR^n
\setminus\Omega$. We denote by $X_0$ the following linear
subspace of $X$
$$X_0:=\big\{g\in X : g=0\,\, \mbox{a.e. in}\,\,
\RR^n\setminus
\Omega\big\}.$$
\indent We remark that $X$ and $X_0$ are non-empty, since $C^2_0 (\Omega)\subseteq X_0$ by \cite[Lemma~11]{sv}.
Moreover, the space $X$ is endowed with the norm defined as
$$
\|g\|_X:=\|g\|_{L^2(\Omega)}+\Big(\int_Q |g(x)-g(y)|^2K(x-y)dxdy\Big)^{1/2}\,,
$$
where $Q:=(\RR^n\times\RR^n)\setminus \mathcal O$ and
${\mathcal{O}}:=({\mathcal{C}}\Omega)\times({\mathcal{C}}\Omega)\subset\RR^n\times\RR^n$. It is easily seen that $\|\cdot\|_X$ is a norm on $X$; see, for
instance, \cite{svmountain}.\par
\indent By \cite[Lemmas~6 and 7]{svmountain} we can take in the sequel the function
\begin{equation}\label{normaX0}
X_0\ni v\mapsto \|v\|_{X_0}:=\left(\int_Q|v(x)-v(y)|^2K(x-y)dxdy\right)^{1/2}
\end{equation}
as norm on $X_0$. Also $\left(X_0, \|\cdot\|_{X_0}\right)$ is a Hilbert space with scalar product
$$
\langle u,v\rangle_{X_0}:=\int_Q
\big( u(x)-u(y)\big) \big( v(x)-v(y)\big)\,K(x-y)dxdy,
$$
see \cite[Lemma~7]{svmountain}.\par
 Note that in (\ref{normaX0}) (and in the related scalar product) the integral can be extended to all $\RR^n\times\RR^n$, since $v\in X_0$ (and so $v=0$ a.e. in $\RR^n\setminus \Omega$).\par
\indent While for a general kernel~$K$ satisfying
conditions from $(\rm k_1)$ to $(\rm k_3)$ we have that
$X_0\subset H^s(\RR^n)$, in the model case $K(x):=|x|^{-(n+2s)}$ the
space $X_0$ consists of all the functions of the usual fractional
Sobolev space $H^s(\RR^n)$ which vanish a.e. outside $\Omega$; see
\cite[Lemma~7]{servadeivaldinociBN}.\par
 Here $H^s(\RR^n)$ denotes
the usual fractional Sobolev space endowed with the norm (the
so-called \emph{Gagliardo norm})
$$
\|g\|_{H^s(\RR^n)}=\|g\|_{L^2(\RR^n)}+
\Big(\int_{\RR^n\times\RR^n}\frac{\,\,\,|g(x)-g(y)|^2}{|x-y|^{n+2s}}\,dxdy\Big)^{1/2}.
$$

\indent Before concluding this subsection, we recall the embedding
properties of~$X_0$ into the usual Lebesgue spaces; see
\cite[Lemma~8]{svmountain}. The embedding $j:X_0\hookrightarrow
L^{\nu}(\RR^n)$ is continuous for any $\nu\in [1,2^*]$, while it is
compact whenever $\nu\in [1,2^*)$, where $2^*:=2n/(n-2s)$ denotes the \textit{fractional critical Sobolev exponent}.\par

\indent For further details on the fractional Sobolev spaces we refer
to~\cite{valpal} and to the references therein, while for other
details on $X$ and $X_0$ we refer to \cite{sv}, where these
functional spaces were introduced, and also to \cite{sY, svmountain,
svlinking, servadeivaldinociBN}, where various properties of these spaces were
proved.\par
Finally, our abstract tool for proving the main result of the present paper is the following local minimum result due to Ricceri (see \cite{R4} and \cite{ricceri}).

\begin{theorem}\label{CV}
 Let $(E,\|\cdot\|)$ be a reflexive real Banach space and let $\Phi,\Psi:X\rightarrow \erre$ be two sequentially weakly lower semicontinuous functionals, with $\Psi$ coercive and $\Phi(0_E)=\Psi(0_E)=0$. Further, set $$J_\mu:=\mu\Psi+\Phi.$$ Then, for each $\sigma>\displaystyle\inf_{u\in X}\Psi(u)$ and each $\mu$ satisfying
 $$
\mu>-\frac{\displaystyle\inf_{u\in \Psi^{-1}((-\infty,\sigma])}\Phi(u)}{\sigma}
 $$
 the restriction of $J_\mu$ to $\Psi^{-1}((-\infty,\sigma))$ has a global minimum.
 \end{theorem}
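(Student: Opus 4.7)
The plan is to combine coercivity, weak lower semicontinuity, and reflexivity to obtain a minimizer of $J_\mu$ on the closed sublevel set $\Psi^{-1}((-\infty,\sigma])$, and then to use the precise condition on $\mu$ to force this minimizer into the open sublevel set $\Psi^{-1}((-\infty,\sigma))$. Set $A:=\Psi^{-1}((-\infty,\sigma])$ and $B:=\Psi^{-1}((-\infty,\sigma))$; note that $B\subset A$.

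First I would establish that $A$ is weakly sequentially compact: coercivity of $\Psi$ renders $A$ norm-bounded, sequential weak lower semicontinuity of $\Psi$ renders $A$ weakly sequentially closed, and reflexivity of $E$ then yields weak sequential compactness via the Eberlein--\v{S}mulian theorem. Since $\Psi(0_E)=0<\sigma$, we have $0_E\in A$ and $I:=\inf_A\Phi\le \Phi(0_E)=0$, so the hypothesis $\mu>-I/\sigma$ forces $\mu>0$. Hence $J_\mu=\mu\Psi+\Phi$ is a sum of sequentially weakly lower semicontinuous functionals with nonnegative weights, so it is itself sequentially weakly lower semicontinuous; the direct method of the calculus of variations then yields $\bar u\in A$ with $J_\mu(\bar u)=\inf_A J_\mu$. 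The same argument applied to $\Phi$ alone produces $v_0\in A$ with $\Phi(v_0)=I$.

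The heart of the argument, and what I expect to be the main obstacle, is showing that in fact $\Psi(\bar u)<\sigma$, so that $\bar u\in B$; once this is in hand, minimality of $\bar u$ on the larger set $A$ automatically gives that $\bar u$ is a global minimum of $J_\mu|_B$. I would argue by contradiction, assuming $\Psi(\bar u)=\sigma$. The inequality $J_\mu(\bar u)\le J_\mu(v_0)$ rewrites as
\[
\Phi(\bar u)-I\;\le\;\mu\bigl(\Psi(v_0)-\sigma\bigr),
\]
whose left-hand side is $\ge 0$ (since $I=\inf_A\Phi$) and whose right-hand side is $\le 0$ (since $\mu>0$ and $\Psi(v_0)\le\sigma$); both sides must therefore vanish, giving $\Phi(\bar u)=I$ and $J_\mu(\bar u)=\mu\sigma+I$. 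Testing $\bar u$ against $0_E\in A$ then yields $\mu\sigma+I=J_\mu(\bar u)\le J_\mu(0_E)=0$, that is, $\mu\le -I/\sigma$, contradicting the strict inequality on $\mu$. The delicate point is exactly this strictness: the conclusion that the minimizer detaches from the boundary $\{\Psi=\sigma\}$ must be extracted from the single strict inequality in the hypothesis on $\mu$, since all the intervening comparisons are a priori only non-strict.
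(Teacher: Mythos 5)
Your argument is correct. Note first that the paper does not actually prove Theorem~\ref{CV}: it is quoted as a known local minimum principle of Ricceri, so there is no internal proof to compare against, and your write-up amounts to a self-contained derivation of the special case stated here. The structure --- direct method on the weakly sequentially compact sublevel set $A=\Psi^{-1}((-\infty,\sigma])$ (bounded by coercivity, weakly sequentially closed by sequential weak lower semicontinuity, hence weakly sequentially compact by reflexivity), followed by exclusion of the boundary $\{\Psi=\sigma\}$ via the strict inequality on $\mu$ --- is exactly how this special case follows from, or re-proves, Ricceri's general statement, in which the threshold for $\mu$ arises from difference quotients of the form $\bigl(\Phi(u)-\inf_A\Phi\bigr)/\bigl(\sigma-\Psi(u)\bigr)$; your choice of the test point $0_E$ recovers the particular bound $-\inf_A\Phi/\sigma$ appearing in the statement. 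Two small remarks. First, your step ``$\Psi(0_E)=0<\sigma$'' uses $\sigma>0$, which is not literally implied by $\sigma>\inf\Psi$; it is, however, forced by the statement itself (the hypothesis divides by $\sigma$ and tacitly places $0_E$ in the sublevel set), and it holds automatically in the paper's application, where $\Psi=\|\cdot\|_{X_0}^2\geq 0$. Second, the detour through the minimizer $v_0$ of $\Phi$ is unnecessary: the hypothesis $\mu>-\inf_A\Phi/\sigma$ is equivalent to $\mu\sigma+\inf_A\Phi>0$, so if $\Psi(\bar u)=\sigma$ then $J_\mu(\bar u)=\mu\sigma+\Phi(\bar u)\geq\mu\sigma+\inf_A\Phi>0=J_\mu(0_E)$, which already contradicts the minimality of $\bar u$ on $A$. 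Your longer route reaches the same contradiction and is sound.
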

\indent We cite the monograph \cite{k2} for related topics on variational methods adopted in this paper and \cite{c1,c2,c3} for recent nice results in the fractional setting.
\section{The Main Result}

Denote by $\mathcal{A}$ the class of all continuous functions $f:\erre\rightarrow \erre$ such that
$$
\displaystyle\sup_{t\in\erre}\frac{|f(t)|}{1+|t|^{\gamma-1}}<+\infty,
$$
for some $\gamma\in [1,2^{*})$. Further, if $f\in \mathcal{A}$ we put
$$
F(\xi):=\displaystyle\int_0^{\xi}f(t)\,dt,
$$
for every $\xi\in\erre$.\par
Let $h\in L^{\infty}(\Omega)$ and $f\in \mathcal{A}$. Consider the fractional problem
\begin{equation} \tag{$P_{K}^{h,f}$} \label{Alf}
\small\left\{
\begin{array}{ll}
-\mathcal L_Ku
= h(x)f(u) & \rm in \quad \Omega\\
 u=0 & {\mbox{in }} \erre^n\setminus \Omega.\\
\end{array}
\right.
\end{equation}
 We recall that a \textit{weak solution} of problem $(P_{K}^{h,f})$ is a function $u\in X_0$ such that
$$
\begin{array}{l} {\displaystyle \int_Q \big(u(x)-u(y)\big)\big(\varphi(x)-\varphi(y)\big)K(x-y)dxdy}\\
\displaystyle \qquad\qquad\qquad\qquad=\int_\Omega h(x)f(u(x))\varphi(x)dx,\,\,\,\,\,\,\,
\end{array}
$$
for every $\varphi \in X_0$.\par
 We observe that
problem~$(P_{K}^{h,f})$ has a variational structure, indeed it is the Euler-Lagrange equation of the functional $J_K:X_0\to \RR$ defined as follows
$$
J_K(u):=\frac 1 2 \|u\|_{X_0}^2 -\displaystyle\int_\Omega h(x)F(u(x))dx.
$$
\indent Note that the functional $J_K$ is Fr\'echet differentiable in $u\in X_0$ and one has
$$
\langle J'_K(u), \varphi\rangle = {\displaystyle \int_Q \big(u(x)-u(y)\big)\big(\varphi(x)-\varphi(y)\big)K(x-y)dxdy}
$$
$$
\qquad \qquad \qquad \qquad\, -\int_\Omega h(x)f(u(x))\varphi(x)dx,
$$
for every $\varphi \in X_0$.\par
 Thus, critical points of $J_K$ are solutions to problem~$(P_{K}^{h,f})$.
In order to find these critical points, we will make use of Theorem \ref{CV}.\par
\smallskip
\textbf{Notations}: Let $0\leq a<b\leq +\infty$. If $\lambda\in [a,b]$ and $\phi,\psi:\erre\rightarrow \erre$ are two assigned functions, we set
$$
g_\lambda^{\varphi,\psi}:=\lambda \psi-\varphi.
$$
\indent Denote
$$
M(\phi,\psi,\lambda):=
\left\{\begin{array}{ll}
{\rm the\, set\, of\, global\, minima\, of\,} g_\lambda^{\varphi,\psi} & \mbox{if}\,\,\,\lambda<+\infty\\
\emptyset & \mbox{if}\,\,\,\lambda=+\infty,
\end{array}\right.
$$
$$
\alpha(\phi,\psi,b):=\max\left\{\inf_{s\in \erre}\psi(s),\sup_{s\in M(\phi,\psi,b)}\psi(s)\right\},
$$
and
$$
\beta(\phi,\psi,a):=\min\left\{\sup_{s\in \erre}\psi(s),\inf_{s\in M(\phi,\psi,b)}\psi(s)\right\},
$$
adopting the conventions $\sup \emptyset=-\infty$ and $\inf \emptyset=+\infty$.\par
 Further, if $q\in [1,2^{*})$, we denote by $\mathfrak{F}_q$ the family of all the lower semicontinuous functions $\psi:\erre\rightarrow \erre$ such that:
\begin{itemize}
\item[$({\rm i}_1)$]$\displaystyle\sup_{s\in \erre}\psi(s)>0$;
\item[$({\rm i}_2)$]$\displaystyle\inf_{s\in \erre}\frac{\psi(s)}{1+|s|^q}>-\infty$;
\item[$({\rm i}_3)$]$\displaystyle\gamma_\psi:=\sup_{s\in \erre\setminus\{0\}}\frac{\psi(s)}{|s|^q}<+\infty$.
\end{itemize}

The next result, that can be viewed as a special case of \cite[Theorem 1]{R3}, will be crucial in the proof of the main theorem.

\begin{proposition}\label{Lemma}
Let $\phi,\psi:\erre\rightarrow \erre$ be two functions such that, for each $\lambda\in (a,b)$, the function $g_\lambda^{\varphi,\psi}$ is lower semicontinuous, coercive and has a global minimum in $\erre$. Assume that
$$
\alpha(\phi,\psi,b)<\beta(\phi,\psi,a).
$$
Then, for each
$$
r\in(\alpha(F,\psi,b),\beta(F,\psi,a)),
$$
there exists $\lambda_r\in(a,b)$, such that the unique global minimum of $g_{\lambda_r}^{\varphi,\psi}$ lies in $\psi^{-1}(r)$.
\end{proposition}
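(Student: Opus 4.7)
The plan is to track the extremal $\psi$-values on global minimizers of $g_\lambda^{\phi,\psi}=\lambda\psi-\phi$ as $\lambda$ varies in $(a,b)$, and then to invert the resulting monotone correspondence at the level $r$. The backbone is the classical exchange argument: for $a<\lambda_1<\lambda_2<b$ and any $s_i\in M(\phi,\psi,\lambda_i)$ (which exist by hypothesis), summing the two minimality inequalities $\lambda_j\psi(s_j)-\phi(s_j)\le\lambda_j\psi(s_{3-j})-\phi(s_{3-j})$ yields $(\lambda_2-\lambda_1)(\psi(s_2)-\psi(s_1))\le 0$, whence $\psi(s_2)\le\psi(s_1)$. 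Consequently the functions
\[
\underline{\xi}(\lambda):=\inf_{s\in M(\phi,\psi,\lambda)}\psi(s),\qquad \overline{\xi}(\lambda):=\sup_{s\in M(\phi,\psi,\lambda)}\psi(s)
\]
are non-increasing on $(a,b)$ and enjoy the strong separation $\overline{\xi}(\lambda_2)\le\underline{\xi}(\lambda_1)$ whenever $\lambda_1<\lambda_2$. Running the same exchange step against the endpoints $a$ and $b$ (where $M$ may be empty, the conventions $\sup\emptyset=-\infty$ and $\inf\emptyset=+\infty$ making the inequalities vacuous in that case) and combining with the trivial bounds $\inf_{\R}\psi\le\underline{\xi}$, $\overline{\xi}\le\sup_{\R}\psi$ gives the sharper confinement
\[
\alpha(\phi,\psi,b)\le \underline{\xi}(\lambda)\le\overline{\xi}(\lambda)\le\beta(\phi,\psi,a),\qquad\lambda\in(a,b).
\]

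Fix now $r\in(\alpha(\phi,\psi,b),\beta(\phi,\psi,a))$ and set $\lambda_r:=\sup\{\lambda\in(a,b):\overline{\xi}(\lambda)\ge r\}$, which lies in $(a,b)$ by the preceding bounds. To produce a global minimizer $s^*\in M(\phi,\psi,\lambda_r)$ with $\psi(s^*)=r$, I would pick sequences $\lambda_n^-\nearrow\lambda_r$ and $\lambda_n^+\searrow\lambda_r$ with corresponding minimizers $s_n^\pm\in M(\phi,\psi,\lambda_n^\pm)$ chosen so that $\psi(s_n^-)\ge r\ge\psi(s_n^+)$, then exploit the coercivity and lower semicontinuity of $g_{\lambda_r}^{\phi,\psi}$ (together with the lower semicontinuity of $\psi$) to extract cluster points $s_-,s_+\in M(\phi,\psi,\lambda_r)$ satisfying $\psi(s_-)\ge r\ge\psi(s_+)$.

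The main obstacle is the final step: the non-increasing function $\overline{\xi}$ could a priori jump across the value $r$, so one must upgrade the bracketing $\psi(s_-)\ge r\ge\psi(s_+)$ into an exact equality $\psi(s^*)=r$ for some minimizer at $\lambda_r$. This is the delicate ingredient of \cite[Theorem~1]{R3}; it amounts to ruling out a gap in the set $\psi(M(\phi,\psi,\lambda_r))$ around $r$, by a careful limiting argument that feeds the semicontinuity of $\psi$ back into the optimality at $\lambda_r$. Combined with the strictness of $\alpha<r<\beta$ and the construction of $\lambda_r$ as a supremum, this refinement simultaneously forces the global minimum at $\lambda_r$ to be unique, completing the proof.
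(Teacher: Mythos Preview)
The paper does not supply its own proof of this proposition; it simply records it as a special case of \cite[Theorem~1]{R3}. So there is no in-house argument to compare against---both you and the authors ultimately point to Ricceri's result.

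Your outline correctly captures the monotonicity skeleton: the exchange inequality does give $\overline\xi(\lambda_2)\le\underline\xi(\lambda_1)$ for $\lambda_1<\lambda_2$ and hence the confinement $\alpha\le\underline\xi\le\overline\xi\le\beta$ on $(a,b)$. But the two decisive steps are not carried out. First, the passage from the bracket $\psi(s_-)\ge r\ge\psi(s_+)$ with $s_\pm\in M(\phi,\psi,\lambda_r)$ to a minimizer with $\psi$-value \emph{exactly} $r$ is only gestured at; you yourself label it ``the delicate ingredient of \cite[Theorem~1]{R3}'' and defer to that reference. (Note also that lower semicontinuity of $\psi$ alone, which is not even among the hypotheses, would not prevent a gap in $\psi(M(\phi,\psi,\lambda_r))$.) Second, and more seriously, your closing claim that this refinement ``simultaneously forces the global minimum at $\lambda_r$ to be unique'' is unsupported: two distinct minimizers $s_-\neq s_+$ at the same $\lambda_r$ with $\psi(s_-)>\psi(s_+)$ are fully compatible with every inequality you have written, since the exchange argument is vacuous at a common $\lambda$. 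Uniqueness is precisely the well-posedness content of Ricceri's theorem, obtained there through a saddle-point characterization rather than through the monotonicity of $\overline\xi$. As it stands, then, your proposal reduces the proposition back to the same citation the paper already invokes, rather than furnishing an independent proof.
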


Set
$$
c_q:=\sup_{u\in X_0\setminus\{0_{X_0}\}}\frac{\|u\|_{L^q(\Omega)}^{q}}{\|u\|_{X_0}^{q}}.
$$
Note that, since $X_0\hookrightarrow L^q(\Omega)$ continuously, clearly $c_q<+\infty$.\par
\smallskip
With the above notations our result reads as follows.

\begin{theorem}\label{Esistenza}
Let $s\in (0,1)$, $n>2s$ and let $\Omega$ be an open bounded set of
$\RR^n$ with Lipschitz boundary and
$K:\RR^n\setminus\{0\}\rightarrow(0,+\infty)$ be a map
satisfying $(\rm k_1)$--$(\rm k_3)$.
Moreover, let $f\in \mathcal{A}$ and $h\in L^{\infty}_+(\Omega)\setminus\{0\}$. Assume that there exists $\psi\in \mathfrak{F}_q$ such that, for each $\lambda\in (a,b),$ the function $g_\lambda^{F,\psi}$ is coercive and has a unique global minimum in $\erre$. Further, suppose that there exists a number $r>0$ satisfying
$$
r\in(\alpha(F,\psi,b),\beta(F,\psi,a))
$$
and
\begin{equation}\label{F}
\sup_{\xi\in \psi^{-1}(r)}F(\xi)<\displaystyle\frac{r^{{2}/{q}}}{2(c_q\gamma_\psi \displaystyle\esssup_{x\in \Omega}h(x))^{{2}/{q}}\|h\|_{L^1(\Omega)}^{{(q-2)}/{q}}}.
\end{equation}
Then, problem $(P_{K}^{h,f})$ admits at least one weak solution which is a local minimum of the energy functional $J_K$ and
satisfies
$$
\int_Q|v(x)-v(y)|^2K(x-y)dxdy<\left(\frac{r\displaystyle \|h\|_{L^1(\Omega)}}{c_q\gamma_\psi\displaystyle\esssup_{x\in \Omega}h(x)}\right)^{2/q}.
$$
\end{theorem}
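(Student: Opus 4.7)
The plan is to apply Theorem~\ref{CV} (Ricceri's local minimum result) on the Hilbert space $X_0$ with the choices
$$\Psi(u):=\frac{1}{2}\|u\|_{X_0}^{2},\qquad \Phi(u):=-\int_\Omega h(x)F(u(x))\,dx,$$
so that $J_K=\Psi+\Phi$ is exactly $J_\mu$ for $\mu=1$. Both functionals vanish at $0_{X_0}$; $\Psi$ is continuous, convex and coercive on $X_0$, hence sequentially weakly lower semicontinuous, while the growth condition $f\in\mathcal{A}$ together with the compact embedding $X_0\hookrightarrow L^{\gamma}(\Omega)$ for $\gamma\in[1,2^{*})$ makes $\Phi$ sequentially weakly continuous. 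With $\mu=1$, the admissibility condition in Theorem~\ref{CV} reduces to exhibiting some $\sigma>0$ for which
$$\sup_{\|u\|_{X_0}^{2}\leq 2\sigma}\int_\Omega h(x)F(u(x))\,dx<\sigma.$$

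Next I would invoke Proposition~\ref{Lemma}. Since $r\in(\alpha(F,\psi,b),\beta(F,\psi,a))$ and, for each $\lambda\in(a,b)$, the function $g_\lambda^{F,\psi}=\lambda\psi-F$ is coercive with a unique global minimum on $\erre$, the proposition produces $\lambda_r\in(a,b)$ and a unique $\bar\xi\in\psi^{-1}(r)$ at which $g_{\lambda_r}^{F,\psi}$ attains its minimum. The minimality of $\bar\xi$ yields the one-variable inequality
$$F(t)\leq \lambda_r\psi(t)-\lambda_r r+F(\bar\xi)\qquad\text{for every }t\in\erre.$$

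The heart of the argument is the choice
$$\sigma:=\frac{1}{2}\left(\frac{r\,\|h\|_{L^1(\Omega)}}{c_q\gamma_\psi\,\esssup_{x\in \Omega}h(x)}\right)^{2/q},$$
and the verification of the displayed supremum bound. For any $u\in X_0$ with $\|u\|_{X_0}^{2}\leq 2\sigma$, this choice forces
$$\|u\|_{X_0}^{q}=(\|u\|_{X_0}^{2})^{q/2}\leq (2\sigma)^{q/2}=\frac{r\,\|h\|_{L^1(\Omega)}}{c_q\gamma_\psi\esssup_{\Omega}h}.$$
Combining property $({\rm i}_3)$, namely $\psi(s)\leq \gamma_\psi|s|^{q}$, with the Sobolev estimate $\|u\|_{L^q(\Omega)}^{q}\leq c_q\|u\|_{X_0}^{q}$ then gives
$$\int_\Omega h(x)\psi(u(x))\,dx\leq \gamma_\psi\Big(\esssup_{x\in\Omega}h(x)\Big)c_q\|u\|_{X_0}^{q}\leq r\,\|h\|_{L^1(\Omega)}.$$
Integrating the scalar inequality from the previous paragraph against $h$ and using $\lambda_r>0$, the $\lambda_r$-terms cancel exactly and one obtains
$$\int_\Omega h(x)F(u(x))\,dx\leq F(\bar\xi)\|h\|_{L^1(\Omega)}\leq \Big(\sup_{\xi\in\psi^{-1}(r)}F(\xi)\Big)\|h\|_{L^1(\Omega)}<\sigma,$$
the last strict inequality being precisely a rearrangement of the hypothesis~(\ref{F}).

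Theorem~\ref{CV} with $\mu=1$ now furnishes a global minimizer $u_0$ of $J_K$ on the (norm-)open set $\Psi^{-1}((-\infty,\sigma))=\{u\in X_0:\|u\|_{X_0}^{2}<2\sigma\}$, which is therefore a local minimum of $J_K$ on the whole of $X_0$, hence a critical point of the Fr\'echet-differentiable $J_K$, hence a weak solution of $(P_{K}^{h,f})$; the strict bound $\|u_0\|_{X_0}^{2}<2\sigma$ is exactly the norm estimate stated in the theorem. The main obstacle is the third paragraph above: converting the one-dimensional minimality inequality into a uniform $X_0$-ball bound requires the precise interplay between the Sobolev constant $c_q$, the growth control $\gamma_\psi$, and the $(\esssup_\Omega h,\|h\|_{L^1(\Omega)})$ balance encoded in~(\ref{F}), and this calibration is exactly what dictates the stated choice of $\sigma$. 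The remaining semicontinuity and differentiability checks are routine consequences of the compact embedding and the subcritical growth of $f$.
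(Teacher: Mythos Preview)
Your argument is correct and follows essentially the same route as the paper: apply Ricceri's local minimum theorem on $X_0$, use Proposition~\ref{Lemma} to locate the minimizer $\bar\xi\in\psi^{-1}(r)$ of $g_{\lambda_r}^{F,\psi}$, integrate the resulting scalar inequality against $h$, and combine the Sobolev bound with $({\rm i}_3)$ to control $\int_\Omega h\,\psi(u)\,dx$ on the relevant sublevel set. The only cosmetic differences are that the paper takes $\Psi(u)=\|u\|_{X_0}^2$ with $\mu=\tfrac12$ (whereas you take $\Psi=\tfrac12\|u\|_{X_0}^2$ with $\mu=1$, an equivalent reparametrization), and that your phrase ``the $\lambda_r$-terms cancel exactly'' should read ``the $\lambda_r$-terms contribute nonpositively,'' since $\int_\Omega h\,\psi(u)\,dx\le r\|h\|_{L^1(\Omega)}$ is an inequality rather than an equality.
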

\begin{proof}
Let us apply Theorem \ref{CV} by choosing $E:=X_0$, and
$$
 \Phi(u):=-\int_\Omega h(x)F(u(x))\,dx,\qquad \Psi(u):=\displaystyle{\|u\|_{X_0}^2}
$$
for every $u\in E$.\par
 Set
\begin{equation}\label{g1}
\sigma:=\left(\frac{r\displaystyle \|h\|_{L^1(\Omega)}}{c_q\gamma_\psi\displaystyle\esssup_{x\in \Omega}h(x)}\right)^{2/q}.
\end{equation}

\indent We claim that
\begin{equation}\label{6}
\Psi^{-1}((-\infty,\sigma])\subseteq D,
\end{equation}
where
$$
D:=\left\{u\in L^{q}(\Omega):\int_\Omega h(x)\psi(u(x))dx\leq r \|h\|_{L^1(\Omega)}\right\}.
$$

Indeed, since $X_0\hookrightarrow L^q(\Omega)$, it follows that
\begin{equation}\label{4}
\Psi^{-1}((-\infty,\sigma])\subseteq\left\{u\in L^{q}(\Omega):\|u\|_{L^q(\Omega)}\leq c_q^{1/q}\sqrt{\sigma}\right\}.
\end{equation}

On the other hand, taking into account that $\psi\in \mathfrak{F}_q$, one also has
\begin{equation}\label{5}
\int_\Omega h(x)\psi(u(x))dx\leq\gamma_\psi\|u\|_{L^q(\Omega)}^q\esssup_{x\in \Omega}h(x).
\end{equation}
Hence, inclusion \eqref{6} follows from inequalities \eqref{4} and \eqref{5}.

\indent Now, for each parameter $\lambda\in (a,b)$ denote by $\xi^{\star}_\lambda$ the unique global minimum (in $\erre$) of the real function $g_\lambda^{F, \psi}$.\par
 By Lemma \ref{Lemma}, since by assumption
$$
r\in (\alpha(F,\psi,b), \beta(F,\psi,a)),
$$
there exists ${\lambda_r}\in (a,b)$ such that $\psi(\xi^{\star}_{\lambda_r})=r$.\par
\indent Hence, since
\begin{equation}\label{gu}
g_{\lambda_r}^{F,\psi}(\xi^{\star}_{\lambda_r})\leq g_{\lambda_r}^{F,\psi}(\xi),
\end{equation}
 for every $\xi\in \erre$, it follows that
 \begin{equation}\label{7}
 F(\xi^{\star}_{\lambda_r})=\sup_{\xi\in \psi^{-1}(r)}F(\xi).
 \end{equation}

 Bearing in mind that $h$ is non-negative, by \eqref{gu} one has
 \begin{equation}\label{gu2}
g_\lambda^{F,\psi}(\xi^{\star}_{\lambda_r})h(x)\leq h(x)g_{\lambda_r}^{F,\psi}(\xi),
\end{equation}
for a.e. $x\in \Omega$. Hence, inequality \eqref{gu2} implies that
 \begin{equation}\label{gu3}
g_{\lambda_r}^{F,\psi}(\xi^{\star}_{\lambda_r})\|h\|_{L^1(\Omega)}\leq \int_\Omega h(x)g_{\lambda_r}^{F,\psi}(u(x))dx,
\end{equation}
for every $u\in L^q(\Omega)$.\par
\indent Exploiting \eqref{gu3}, for every $u\in D$, one has
$$
\int_\Omega h(x)F(u(x))dx\leq F(\xi^\star_{\lambda_r})\|h\|_{L^1(\Omega)}.
$$
Owing to \eqref{7}, the above inequality assumes the form
 \begin{equation}\label{colsup}
\int_\Omega h(x)F(u(x))dx\leq \sup_{\xi\in \psi^{-1}(r)}F(\xi)\|h\|_{L^1(\Omega)},
\end{equation}
for every $u\in D$.\par
Observing that
$$
r=\frac{\displaystyle c_q\gamma_\psi \sigma^{q/2}\esssup_{x\in \Omega}h(x)}{\|h\|_{L^1(\Omega)}},
$$
and since inclusion \eqref{6} holds, it follows that
\begin{equation}\label{oggi}
\displaystyle\sup_{u\in \Psi^{-1}((-\infty,\sigma])}\int_\Omega h(x)F(u(x))\,dx\leq \sup_{\xi\in \psi^{-1}(r)}F(\xi)\|h\|_{L^1(\Omega)}.
\end{equation}

\indent Finally, relations \eqref{F} and \eqref{oggi} yield
$$
\frac{\displaystyle\sup_{u\in \Psi^{-1}((-\infty,\sigma])}\int_\Omega h(x)F(u(x))\,dx}{\sigma}<\frac{1}{2},
$$
that is
$$
\frac{1}{2}>-\frac{\displaystyle\inf_{u\in \Psi^{-1}((-\infty,\sigma])}\Phi(u)}{\sigma}.
$$
 \indent
    Then, the assertion of Theorem \ref{CV} follows and the existence of one weak solution $u\in \Psi^{-1}((-\infty,\sigma))$ to our problem is established.
\end{proof}
\begin{remark}\rm{
The above existence theorem extends to the nonlocal setting some results, already known in the literature in the case of the classical $p$-Laplace operator (see \cite{R2}).}
\end{remark}
\section{The fractional Laplacian case}
As observed in Section 2, by taking $K(x):=|x|^{-(n+2s)}$, the
space $X_0$ consists of all the functions of the usual fractional
Sobolev space $H^s(\RR^n)$ which vanish almost everywhere outside $\Omega$; see
\cite[Lemma~7]{servadeivaldinociBN}.\par
 In this case $\mathcal L_K$ is the fractional Laplace operator defined as
$$
-(-\Delta)^s u(x):=
\int_{\erre^n}\frac{u(x+y)+u(x-y)-2u(x)}{|y|^{n+2s}}\,dy,
\,\,\,\,\, x\in \erre^n.
$$
\indent By \cite[Proposition~9 and Appendix~A]{svlinking}, we a variational characterization of the first eigenvalue (denoted by ${\lambda_{1,s}}$) of the problem
$$ \left\{
\begin{array}{ll}
(-\Delta)^s u=\lambda u & {\mbox{ in }} \Omega\\
u=0 & {\mbox{ in }} \erre^n\setminus \Omega,
\end{array} \right.
$$
as follows
\begin{equation}
\lambda_{1,s}=\min_{u\in X_0\setminus{\left\{0_{X_0}\right\}}}
\frac{\displaystyle\int_{\mathbb{R}^{2n}}\frac{\left|u(x)-u(y)\right|^{2}}{|x-y|^{n+2s}}dx\,dy}
{\displaystyle \int_{\Omega}u(x)^{2}dx}.\label{lambda1}
\end{equation}
\indent In the sequel it will be useful the following regularity result for the eigenvalues
of $(-\Delta)^s$ proved in \cite[Theorem 1]{dueoperatori}. See also \cite[Proposition 2.4]{sY} for related topics.
\begin{proposition}\label{propossizioneRaff}
Let $e\in X_0$ and $\lambda>0$ be such that
$$
\langle e,\varphi\rangle_{X_0}=\lambda \int_\Omega e(x)\varphi(x)dx,
$$
for every $\varphi\in X_0$. Then $e\in C^{0,\alpha}(\bar\Omega),$ for some $\alpha\in (0,1),$ i.e. the function $e$ is H\"{o}lder continuous up to the boundary.
\end{proposition}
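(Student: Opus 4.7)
The plan is to upgrade the regularity of the eigenfunction $e$ in two stages: first from $e \in X_0 \subset H^s(\erre^n)$ to $e \in L^\infty(\Omega)$ by a Moser-type iteration, and then from a bounded eigenfunction to a H\"older continuous one by invoking the boundary regularity theory for the Dirichlet fractional Laplacian with bounded right-hand side. As a starting point, the continuous embedding $X_0 \hookrightarrow L^{2^*}(\Omega)$ recalled in Section~\ref{section2} already gives $e \in L^{2^*}(\Omega)$, so that the right-hand side $\lambda e$ of the equation lies in $L^{2^*}(\Omega)$ as well.

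For the first stage, I would test the eigenvalue identity with the power-truncated test functions $\varphi_M := e\, \min\{|e|^{2(\beta-1)}, M^{2(\beta-1)}\} \in X_0$ for $\beta > 1$ and $M > 0$, and then pass $M \to +\infty$. The classical pointwise inequality
$$
(a-b)\bigl(a|a|^{2(\beta-1)} - b|b|^{2(\beta-1)}\bigr) \geq \frac{1}{\beta}\bigl| |a|^\beta\,{\rm sgn}(a) - |b|^\beta\,{\rm sgn}(b)\bigr|^2,
$$
applied to $a = e(x)$ and $b = e(y)$ against the kernel $|x-y|^{-(n+2s)}$, allows one to bound the $X_0$-seminorm of $|e|^\beta\,{\rm sgn}(e)$ by a multiple of $\lambda \int_\Omega |e(x)|^{2\beta}\,dx$. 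Combining with the Sobolev embedding $X_0 \hookrightarrow L^{2^*}(\Omega)$ applied to $|e|^\beta\,{\rm sgn}(e) \in X_0$, and iterating along the chain of exponents $\beta_k := (2^*/2)^k$, one obtains estimates of the form $\|e\|_{L^{2^*\beta_k}(\Omega)} \leq C_k \|e\|_{L^{2\beta_k}(\Omega)}$. A careful book-keeping of the constants $C_k$, ensuring that $\prod_{k\geq 0} C_k^{1/\beta_k}$ converges, then yields $e \in L^\infty(\Omega)$.

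For the second stage, once $e \in L^\infty(\Omega)$ the weak formulation says that $e$ is a Dirichlet $X_0$-solution of $(-\Delta)^s e = f$ in $\Omega$ with $f := \lambda e \in L^\infty(\Omega)$ and $e \equiv 0$ in $\erre^n \setminus \Omega$. At this point the boundary H\"older regularity theorem for the fractional Laplacian on bounded Lipschitz domains with bounded right-hand side, precisely Theorem~1 of \cite{dueoperatori} cited just before the statement, delivers $e \in C^{0,\alpha}(\bar\Omega)$ for some $\alpha \in (0,1)$ depending only on $s$, $n$, and the Lipschitz character of $\partial\Omega$. The main obstacle in the whole scheme is the $L^\infty$ bootstrap: because $(-\Delta)^s$ is nonlocal, the Caccioppoli-type inequality that drives the iteration involves double integrals over $\erre^n \times \erre^n$ rather than over $\Omega$ alone, so one must use carefully the fact that $e \equiv 0$ outside $\Omega$ in order to control the tail contributions arising from $(\erre^n \setminus \Omega) \times \Omega$ and avoid a blow-up of the iteration constants; by comparison, the final boundary regularity step is an essentially off-the-shelf application of the cited result, the Lipschitz hypothesis on $\partial\Omega$ assumed in the statement being exactly what that theorem requires.
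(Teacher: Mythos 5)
The paper does not actually prove this proposition: it is imported verbatim from the literature, namely \cite[Theorem~1]{dueoperatori} (with \cite[Proposition~2.4]{sY} pointed to for related material), so there is no in-paper argument to compare yours against. Your two-stage bootstrap --- a global Moser iteration with test functions $e\,\min\{|e|^{2(\beta-1)},M^{2(\beta-1)}\}$ to reach $e\in L^\infty(\Omega)$, followed by boundary H\"older regularity for $(-\Delta)^s$ with bounded right-hand side and zero exterior datum --- is essentially the standard proof of the cited result, and your first stage is sound as sketched: the pointwise inequality, the Sobolev embedding $X_0\hookrightarrow L^{2^*}(\Omega)$ applied to $|e|^\beta\,{\rm sgn}(e)$, and the geometric growth of $\beta_k=(2^*/2)^k$ do yield a convergent product of iteration constants. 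One small correction: the ``tail'' difficulty you single out as the main obstacle is largely absent here, because the iteration is global (no cut-off functions), and since both $e$ and the test function vanish a.e.\ outside $\Omega$, the full double integral over $Q$ is controlled directly by the pointwise inequality with no separate contribution from $(\erre^n\setminus\Omega)\times\Omega$ to estimate.

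The more substantive caveat concerns your second stage. You invoke \cite[Theorem~1]{dueoperatori} as an off-the-shelf $L^\infty$-to-$C^{0,\alpha}(\bar\Omega)$ regularity theorem for the Dirichlet fractional Laplacian with bounded right-hand side; but according to the paper, that theorem \emph{is} the proposition being proved (H\"older regularity of eigenfunctions), so using it at that point is either circular or a misattribution. The step you actually need --- boundary H\"older estimates up to $\partial\Omega$ for $(-\Delta)^s u=f\in L^\infty(\Omega)$ with $u\equiv 0$ outside a bounded Lipschitz domain, typically obtained from interior estimates plus barrier constructions exploiting the exterior cone condition --- is a genuinely external ingredient that does not appear in this paper's bibliography, and its proof is the real content hidden behind your ``off-the-shelf'' remark. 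If you intend your argument to be a reconstruction of the proof rather than a reduction to the same citation the paper uses, that final step must be either proved or attributed to an appropriate source.
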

\indent Taking into account the above facts, a meaningful consequence of Theorem \ref{Esistenza} is the following one.
\begin{theorem}\label{Esistenza2}
Let $s\in (0,1),$ $n>2s$ and $\Omega$ be an open bounded set of
$\RR^n$ with Lipschitz boundary.
Moreover, let $h\in L^{\infty}(\Omega)\setminus\{0\}$ with $\displaystyle\essinf_{x\in \Omega}h(x)>0$  and let $f:[0,+\infty)\rightarrow [0,+\infty)$ be a continuous function satisfying the following hypotheses$:$

\begin{itemize}
\item[$({\rm h}_1)$] for some $q\in\left[1,2^*\right)$ the function
$$
t\mapsto \frac{f(t)}{t^{q-1}}
$$
is strictly decreasing in $(0,+\infty)$ and $\displaystyle\lim_{t\rightarrow +\infty}\frac{f(t)}{t^{q-1}}=0$$;$
\item[$({\rm h}_2)$] one has
$$
\liminf_{\xi\rightarrow 0^+}\frac{F(\xi)}{\xi^2}>\frac{\lambda_{1,s}}{2\displaystyle\essinf_{x\in\Omega}h(x)};
$$
\item[$({\rm h}_3)$] there exists $\xi_0>0$ such that
$$
F(\xi_0)<\displaystyle\frac{\xi_0^{{2}}}{2(c_q\displaystyle\esssup_{x\in \Omega}h(x))^{{2}/{q}}\|h\|_{L^1(\Omega)}^{{(q-2)}/{q}}}.
$$
\end{itemize}
Then, the problem

$$
\begin{array}{l} {\displaystyle \int_{\erre^{2n}}
\frac{(u(x)-u(y))(\varphi(x)-\varphi(y))}{|x-y|^{n+2s}}dxdy}\\
\displaystyle \qquad\qquad\qquad\qquad\qquad=\int_\Omega h(x)f(u(x))\varphi(x)dx,\,\,\,\,\,\,\,
\end{array}
$$
for every
$$
\varphi\in H^{s}(\erre^n)\,\, {\rm such\,\, that}\,\, \varphi= 0\,\,\, {\rm a.e.\,\, in}\,\, \erre^n\setminus\Omega,
$$
admits at least one non-negative and non-zero weak solution
$u\in  H^s(\erre^n),$
such that $u=0$ a.e. in $\erre^n\setminus\Omega$. Moreover,
$u$ is a local minimum of the energy functional $J_K$ and
satisfies
\begin{equation}\label{F2}
\int_{\RR^{2n}}\frac{|u_j(x)-u_j(y)|^2}{|x-y|^{n+2s}}\,dxdy<\left(\frac{\displaystyle \|h\|_{L^1(\Omega)}}{c_q\displaystyle\esssup_{x\in \Omega}h(x)}\right)^{2/q}\xi_0^2.
\end{equation}
\end{theorem}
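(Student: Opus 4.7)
The plan is to derive Theorem~\ref{Esistenza2} from Theorem~\ref{Esistenza} by taking the canonical auxiliary function $\psi(t) := |t|^q$, and then to upgrade the solution produced by that theorem to a non-negative, non-trivial one by ad hoc arguments relying on $({\rm h}_2)$ and the vanishing of $f$ on $(-\infty, 0)$.

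First I would extend $f$ to $\erre$ by setting $f(t) := 0$ for $t \leq 0$, so that $F(t) = 0$ for $t \leq 0$ and $f \in \mathcal{A}$, and take $\psi(t) := |t|^q$, which lies in $\mathfrak{F}_q$ with $\gamma_\psi = 1$. To verify that $g_\lambda^{F,\psi}(t) = \lambda|t|^q - F(t)$ is coercive and has a unique global minimum for every $\lambda > 0$, observe that on $(-\infty, 0]$ it equals $\lambda|t|^q$, minimized only at $t = 0$, while on $(0, +\infty)$ its derivative factorises as $t^{q-1}\bigl(q\lambda - f(t)/t^{q-1}\bigr)$, admitting at most one zero $t_\lambda$ thanks to the strict monotonicity of $f(t)/t^{q-1}$ in $({\rm h}_1)$; the strict inequality $f(s)/s^{q-1} > q\lambda$ for $s \in (0, t_\lambda)$ gives $F(t_\lambda) > \lambda t_\lambda^q$, so $g_\lambda^{F,\psi}(t_\lambda) < 0$ and $t_\lambda$ is the unique global minimizer whenever it exists, otherwise $0$ is. Coercivity follows from $F(t)/t^q \to 0$ as $t \to +\infty$, again by $({\rm h}_1)$.

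Then I would take $(a, b) := (0, +\infty)$. Since $f > 0$ on $(0, +\infty)$, the function $F$ is strictly increasing on $[0, +\infty)$ and hence attains no maximum, so $M(F, \psi, 0) = \emptyset$; combined with $M(F, \psi, +\infty) = \emptyset$ by convention, this gives $\alpha(F, \psi, +\infty) = 0$ and $\beta(F, \psi, 0) = +\infty$, so every positive $r$ is admissible. Setting $r := \xi_0^q$, one has $\psi^{-1}(r) = \{-\xi_0, \xi_0\}$ and $\sup_{\xi \in \psi^{-1}(r)} F(\xi) = F(\xi_0)$ because $F(-\xi_0) = 0$; with $\gamma_\psi = 1$ and $r^{2/q} = \xi_0^2$, inequality \eqref{F} of Theorem~\ref{Esistenza} reduces exactly to $({\rm h}_3)$. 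Applying Theorem~\ref{Esistenza} therefore yields a weak solution $u \in X_0$ which is a local minimum of $J_K$ and obeys the bound \eqref{F2}.

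To prove $u \geq 0$ I would test the weak formulation with $\varphi := u^- \in X_0$. The right-hand side vanishes because $f(u)u^- = 0$ pointwise (wherever $u^- > 0$ one has $u < 0$ and $f(u) = 0$ by extension); decomposing $u = u^+ - u^-$ with $u^+ u^- \equiv 0$, the left-hand side becomes
$$
\langle u, u^-\rangle_{X_0} = -\|u^-\|_{X_0}^2 - \int_{\erre^n\times\erre^n} \frac{u^+(x)u^-(y) + u^+(y)u^-(x)}{|x-y|^{n+2s}}\,dx\,dy \leq -\|u^-\|_{X_0}^2,
$$
forcing $u^- = 0$. For non-triviality, let $e_1 \in X_0$ be a non-negative first eigenfunction of $(-\Delta)^s$ in $\Omega$, continuous up to the boundary by Proposition~\ref{propossizioneRaff}. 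By $({\rm h}_2)$ pick a number $\ell$ with $\lambda_{1,s}/\bigl(2\essinf_{x\in\Omega} h(x)\bigr) < \ell < \liminf_{\xi \to 0^+} F(\xi)/\xi^2$; then $F(\xi) \geq \ell\xi^2$ on some interval $(0, \delta)$, and since $e_1$ is bounded, for all sufficiently small $\epsilon > 0$ one has $\epsilon e_1(x) \in [0, \delta)$ on $\Omega$ and $\|\epsilon e_1\|_{X_0}^2 < \sigma$; hence, using \eqref{lambda1},
$$
J_K(\epsilon e_1) \leq \frac{\epsilon^2 \lambda_{1,s}}{2}\|e_1\|_{L^2(\Omega)}^2 - \ell\,\epsilon^2 \bigl(\essinf_{x\in\Omega} h(x)\bigr)\|e_1\|_{L^2(\Omega)}^2 < 0 = J_K(0),
$$
so the global minimum $u$ of $J_K$ on $\Psi^{-1}((-\infty,\sigma))$ cannot be zero. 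The most delicate point of the argument is securing the uniqueness of the global minimum of $g_\lambda^{F,\psi}$ in the first paragraph, where the strict monotonicity in $({\rm h}_1)$ plays an essential role.
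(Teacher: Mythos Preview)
Your proof follows the same strategy as the paper's: take $\psi(t)=|t|^q$, $(a,b)=(0,+\infty)$, $r=\xi_0^q$, apply Theorem~\ref{Esistenza}, and rule out the trivial solution via the first eigenfunction and $({\rm h}_2)$; where the paper merely cites \cite{R2} for the verification of the hypotheses and \cite[Lemma~6]{svw} for nonnegativity, you supply those arguments directly. One small slip: extending $f$ by $0$ on $(-\infty,0]$ breaks continuity at $0$ whenever $f(0)>0$ (nothing in $({\rm h}_1)$--$({\rm h}_3)$ forces $f(0)=0$), so the extended $f$ need not lie in $\mathcal A$; the paper extends by the constant $f(0)$ instead, and your uniqueness-of-minimum and nonnegativity arguments go through unchanged with that choice since then $g_\lambda^{F,\psi}(t)=\lambda|t|^q+f(0)|t|\ge0$ on $(-\infty,0]$ and $h(x)\tilde f(u)u^-\ge0$ pointwise.
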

\begin{proof} Let us define
$$
\widetilde{f}(t):=
\left\{\begin{array}{ll}
f(t) & \mbox{if}\,\,\,t\geq 0\\
f(0) & \mbox{if}\,\,\,t<0,
\end{array}\right.
$$
and consider the following problem
\begin{equation}  \tag{$D_{\widetilde{f}}^{h}$} \label{GiornoV}
\left\{
\begin{array}{ll}
(-\Delta)^s u=h(x)\widetilde{f}(u) & {\mbox{\,\,\rm in }} \Omega\\
u=0 & {\mbox{ \rm in }} \erre^n\setminus \Omega.
\end{array} \right.
\end{equation}
\indent By \cite[Lemma 6]{svw} every weak solution of problem \eqref{GiornoV} is non-negative in $\Omega$. Furthermore, every non-negative solution of problem \eqref{GiornoV} also solves our initial problem. Taking $a:=0$  and $b:=+\infty,$ and by exploiting Theorem \ref{CV} with $$\psi(t):=|t|^q,\,\,\forall\, t\in \erre$$
substantially arguing as in \cite{R2}, the existence of one weak solution of problem \eqref{GiornoV} which is a local minimum of the associated energy functional (namely $\widetilde{J}_K$) satisfying \eqref{F2} is established.\par
 In conclusion, we prove that $0_{X_0}$ is not a local minimum of $\widetilde{J}_K$, i.e. the obtained solution is non-zero.\par
 For this purpose, let us observe that the first eigenfunction $e_1\in X_0$ is positive in $\Omega$, see \cite[Corollary 8]{svw}, and it follows by (\ref{lambda1}) that
\begin{equation}\label{ScV}
\|e_1\|^2_{X_0}={\lambda_1}\int_\Omega e_1(x)^2 dx.
\end{equation}
\indent Thanks to $({\rm h}_2)$, there exists $\delta>0$ such that
$$
F(\xi)>\frac{\lambda_{1,s}}{2\displaystyle\essinf_{x\in\Omega}h(x)}\xi^2,
$$
\noindent for every $\xi\in (0,\delta)$.\par
 Now, by Proposition \ref{propossizioneRaff}, one has that $e_1\in C^{0,\alpha}(\bar\Omega)$. Hence, we can define $\theta_{\eta}(x):=\eta e_1(x)$, for every $x\in \Omega$, where $$\eta\in \Lambda_\delta:=\left(0,\displaystyle\frac{\delta}{\displaystyle\max_{x\in \Omega}e_1(x)}\right).$$
\indent Taking into account \eqref{ScV}, we easily get
 \begin{eqnarray*}%\label{A2}
\int_\Omega h(x)F(\theta_\eta(x))dx &> & \frac{\lambda_1\displaystyle\int_\Omega h(x)\theta_\eta(x)^2dx}{2\displaystyle\essinf_{x\in\Omega}h(x)}\\\nonumber
               &\geq& {\frac{\lambda_1}{2}\displaystyle\int_\Omega \theta_\eta(x)^2dx}\\\nonumber
               &=& \frac{1}{2}\|\theta_\eta\|^2_{X_0},\nonumber
\end{eqnarray*}
\noindent that is
$$
\widetilde{J}_K(\theta_\eta)=\frac{1}{2}\|\theta_\eta\|^2_{X_0}-\int_\Omega h(x)F(\theta_\eta(x))dx<0,
$$
\indent for every $\eta\in \Lambda_\delta$.\par
\noindent The proof is thus complete.
\end{proof}

\indent It is easy to see that Theorem \ref{Esistenza3} in Introduction is a consequence of Theorem \ref{Esistenza2}. A direct application of this result reads as follows.

\begin{es}\label{esempio0}\rm{
Let $s\in (0,1)$, $n>2s$ and let $\Omega$ be an open bounded set of $\erre^n$
with
Lipschitz boundary. By virtue of Theorem \ref{Esistenza2}, for every
$
\alpha>\lambda_{1,s},
$
the following nonlocal problem
$$ \left\{
\begin{array}{ll}
(-\Delta)^s u=\displaystyle\frac{\alpha u}{1+u^2} & {\mbox{ \rm in }} \Omega\\
u=0 & {\mbox{ \rm in }} \erre^n\setminus \Omega,
\end{array} \right.
$$
admits at least one non-negative and non-zero weak solution
$u_\alpha\in  H^s(\erre^n),$
such that $u_{\alpha}=0$ a.e. in $\erre^n\setminus\Omega$.}
\end{es}

\begin{remark}\rm{
We observe that a special case of our results ensures that if $f:[0,+\infty)\rightarrow [0,+\infty)$ is any positive $C^1$-function such that $f(0)=0$, $f(t)/t$ is strictly decreasing in $(0,+\infty)$, $f(t)/t\rightarrow 0$ as $t\rightarrow +\infty$ and $f'(0)>\lambda_1$, then the following problem
$$ \left\{
\begin{array}{ll}
(-\Delta)^s u=f(u) & {\mbox{ \rm in }} \Omega\\
u=0 & {\mbox{ \rm in }} \erre^n\setminus \Omega,
\end{array} \right.
$$
admits at least one non-negative and non-zero weak solution
$u\in  H^s(\erre^n),$
such that $u=0$ a.e. in $\erre^n\setminus\Omega$.}
\end{remark}

{\bf Acknowledgements.}
  The manuscript was realized within the auspices of the GNAMPA Project 2014 titled {\it Propriet\`{a} geometriche e analitiche per problemi non-locali} and the SRA grants P1-0292-0101 and J1-5435-0101.

\end{document}